\newtheorem{thm}{Theorem}
\newtheorem{lem}[thm]{Lemma}
\newtheorem{cor}[thm]{Corollary}
\newcommand{\term}[1]{\textit{#1}}
\theoremstyle{definition}
\newtheorem*{defn}{Definition}
\DeclareMathOperator{\Biggg}{Big}
\newcommand\categ[1]{\mathbf{#1}}
\newcommand\sets{\categ{Sets}}
\title{A Categorical Construction of Ultrafilters}
\author[D.~Litt]{Daniel Litt} 
\address{Department of Mathematics, Harvard University}
\email{dalitt@fas.harvard.edu}
\author[Z.~Abel]{Zachary Abel}
\address{Department of Mathematics, Harvard University}
\email{zabel@fas.harvard.edu}
\author[S.~D.~Kominers]{Scott~D.\ Kominers}
\address{Department of Mathematics, Harvard University\newline\indent c/o 8520 Burning Tree Road, Bethesda, MD, 20817}
\email{kominers@fas.harvard.edu}
\email{skominers@gmail.com}
\subjclass[2000]{54D80 (16B50)}
\keywords{Ultrafilters, inverse limits, finite partitions}
\begin{document}\maketitle
\begin{center}
\textit{Dedicated to Professor Elem\'er Elad Rosinger}
\end{center}
\begin{abstract}
Ultrafilters are useful mathematical objects having applications
in nonstandard analysis, Ramsey theory, Boolean algebra, topology, and other
areas of mathematics. In this note, we provide a categorical construction of
ultrafilters in terms of the inverse limit of an inverse family of finite partitions;
this is an elementary and intuitive presentation of a consequence of the
profiniteness of Stone spaces. We then apply this construction to answer a
question of Rosinger in the negative.
\end{abstract}

\section{Introduction}

It is well-known that the category $\categ{Stone}$ of Stone spaces with continuous maps is categorically equivalent to the pro-completion of the category $\categ{FinSet}$ of finite sets (see~\cite[p.~236]{johnstone}).  We illuminate this equivalence in the context of spaces of ultrafilters, in an elementary setting which does not require topological methods.  In particular,
we give an elementary construction of ultrafilter spaces as an inverse limit,
without resorting to Stone spaces or to the correspondence between maximal ideals
and ultrafilters. We then give a brief application of this construction, answering a
question of Rosinger \cite{rosinger} in the negative.
\section{Ultrafilters}\label{sec:defn}
\begin{defn}\label{def:uf}
Let $S$ be a set.  An \term{ultrafilter on $S$} is a subset $\mathcal{U}$ of $2^S$, the power set of $S$, such that:  
\begin{enumerate}
\item $\emptyset\not\in \mathcal{U}$,
\item $A\in \mathcal{U}, A\subset B\implies B\in \mathcal{U}$,
\item $A\in \mathcal{U}, B\in \mathcal{U}\implies A\cap B\in \mathcal{U}$,
\item $A\not\in \mathcal{U}\implies S\setminus A\in \mathcal{U} $.
\end{enumerate}
\end{defn}
We say that an ultrafilter is \term{free} if it contains no finite sets.  It has been shown (see, for example, \cite[Form 63]{hr}, \cite[pp.~145--146]{pinto}) that
\begin{thm}[Free Ultrafilter Theorem]\label{thm:fut}
If $S$ is infinite, then there exists a free ultrafilter on $S$.
\end{thm}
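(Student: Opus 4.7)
The plan is to apply Zorn's lemma to the poset of proper filters containing the Fr\'echet (cofinite) filter, and then show that a maximal such filter automatically satisfies the complementation axiom (4) of Definition~\ref{def:uf} and contains no finite sets.

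First I would recall that a \emph{filter} on $S$ is a subset of $2^S$ satisfying conditions (1)--(3) of Definition~\ref{def:uf}, and observe that the Fr\'echet filter
$\mathcal{F}_0 = \{A \subseteq S : S\setminus A \text{ is finite}\}$
is a proper filter precisely because $S$ is infinite, so that no cofinite subset is empty. Let $\mathcal{P}$ be the collection of all filters on $S$ containing $\mathcal{F}_0$, partially ordered by inclusion. The union of any chain in $\mathcal{P}$ is again an element of $\mathcal{P}$, since any two elements of such a union already lie in a common member of the chain (giving axiom~(3)) and $\emptyset$ lies in none of them (giving properness). Zorn's lemma then yields a maximal element $\mathcal{U}\in\mathcal{P}$.

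The crucial step, which I expect to demand the most care, is deducing axiom~(4) from maximality. I would argue contrapositively: if $A\notin\mathcal{U}$, then the filter generated by $\mathcal{U}\cup\{A\}$ strictly extends $\mathcal{U}$, and maximality forces this generated filter out of $\mathcal{P}$; since it still contains $\mathcal{F}_0$ and satisfies (1)--(3), the only way it can fail to lie in $\mathcal{P}$ is by containing $\emptyset$. Unwinding the definition of the generated filter, this means some $B\in\mathcal{U}$ satisfies $B\cap A=\emptyset$, i.e.\ $B\subseteq S\setminus A$, and then axiom~(2) places $S\setminus A\in\mathcal{U}$, as required.

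Freeness of $\mathcal{U}$ then comes at no extra cost: because $\mathcal{U}\supseteq\mathcal{F}_0$, every cofinite set lies in $\mathcal{U}$, so no finite $F\subseteq S$ can lie in $\mathcal{U}$ lest $F\cap(S\setminus F)=\emptyset$ violate~(1). The argument uses only Zorn's lemma beyond ZF, which I take to be unavoidable: some form of the axiom of choice is the standard source of nonconstructivity in Theorem~\ref{thm:fut}. Alternatively, one could anticipate the inverse-limit viewpoint hinted at in the abstract by exhibiting a compatible system of choices of blocks, one for each finite partition of $S$, whose existence would again rest on a choice-type principle applied to a cofiltered system of nonempty finite sets.
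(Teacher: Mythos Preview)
Your proposal is correct and follows precisely the approach the paper itself sketches: extend the cofinite (Fr\'echet) filter to a maximal filter via Zorn's lemma, verify that maximality forces axiom~(4), and note that containment of the cofinite filter rules out finite members. The paper does not spell out the details you supply; it simply cites the standard argument and records this outline in a single paragraph.
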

The standard proof of Theorem~\ref{thm:fut}, given in \cite[pp.~145--146]{pinto}, considers, more generally, \term{filters} on $S$, i.e.\ subsets of $2^S$ satisfying Properties (1)--(3) from Definition~\ref{def:uf}.  It proves via Zorn's  lemma that given any filter $\mathcal{F}$, there exists an ultrafilter $\mathcal{U}\supset \mathcal{F}$.  Taking $\mathcal{F}$ to be the \term{cofinite filter} (the collection of all sets whose complements are finite) gives Theorem~\ref{thm:fut}.

Now, let $\sets$ denote the category of sets and let $\mathcal{FP}(S)\subset \sets$ denote the set of finite partitions of a set $S$.  Let $\mathcal{FPS}(S)\subset \sets$ denote the  set of finite partitions of \emph{subsets} of $S$ with the partial ordering defined as follows:   $\Delta'\leq \Delta$ if and only if for all $D'\in \Delta',$ there exists a unique $D\in \Delta$ such that $D'\subset D$, i.e.\ $\Delta'$ is a subset of a (possibly trivial) refinement of $\Delta$.  This turns $\mathcal{FPS}(S)$ into an inverse family with morphisms$$\{\psi_{\Delta', \Delta}: \Delta'\leq \Delta\},$$
where $\psi_{\Delta', \Delta}$ is defined by $$\psi_{\Delta', \Delta} :A\in \Delta'\mapsto B\in \Delta \text{ s.t. } A\subset B.$$

The following property of ultrafilters will be useful:
\begin{lem}\label{lemma:fpuf}
Let $\mathcal{U}$ be an ultrafilter on $S$, and let $\Delta\in \mathcal{FP}(S)$.  Then there exists a unique $D\in \Delta$ such that $D\in \mathcal{U}$.
\end{lem}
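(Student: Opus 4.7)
The plan is to prove existence and uniqueness separately, and in both cases argue by contradiction using the ultrafilter axioms from Definition~\ref{def:uf}. Since $\Delta$ is a finite partition of $S$, I can write $\Delta = \{D_1, \ldots, D_n\}$ with $\bigcup_i D_i = S$ and $D_i \cap D_j = \emptyset$ for $i \neq j$.

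For existence, I would suppose toward a contradiction that no $D_i$ lies in $\mathcal{U}$. By property~(4), each complement $S \setminus D_i$ then belongs to $\mathcal{U}$. Iteratively applying property~(3) to this finite collection (this is where finiteness of $\Delta$ is essential), the intersection $\bigcap_{i=1}^n (S \setminus D_i)$ also lies in $\mathcal{U}$. But by De Morgan's law this intersection equals $S \setminus \bigcup_i D_i = S \setminus S = \emptyset$, contradicting property~(1).

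For uniqueness, suppose $D_i, D_j \in \mathcal{U}$ with $i \neq j$. Property~(3) forces $D_i \cap D_j \in \mathcal{U}$, but disjointness of the partition gives $D_i \cap D_j = \emptyset$, again contradicting property~(1).

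Neither direction presents a real obstacle; the only subtlety is that existence genuinely requires $\Delta$ to be finite, since property~(3) is only stated for pairwise (hence finite) intersections. Both arguments are just two-line applications of the ultrafilter axioms, so I would expect the written proof to be shorter than this plan.
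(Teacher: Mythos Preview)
Your proposal is correct and matches the paper's own proof essentially line for line: both argue existence by complementing each $D\in\Delta$ via property~(4), intersecting via property~(3) to obtain $\emptyset\in\mathcal{U}$, and both argue uniqueness by intersecting two distinct members of $\Delta$. Your version is even a bit more explicit about where finiteness of $\Delta$ is used and about the De~Morgan step, but there is no substantive difference.
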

\begin{proof}
Assume to the contrary that no such $D$ exists.  Then $S\setminus D\in \mathcal{U}$ for each $D\in \Delta$.  Hence their intersection, $$\bigcap_{D\in \Delta} S\setminus D=\emptyset,$$ is in $\mathcal{U}$ by Property 3 of Definition~\ref{def:uf}, which contradicts Property 1 of Definition~\ref{def:uf}; that is, the empty set cannot be in $\mathcal{U}$.

Now, assume that $D, D'\in \Delta$ are both in $\mathcal{U}$.  Then $D\cap D'=\emptyset\in U$---again, a contradiction.
\end{proof}

\section{The Inverse Limit}\label{sec:invlim}
We require one additional definition, which is central to our categorical approach to ultrafilters:\begin{defn}The \term{inverse limit} of an inverse family $(X_i, f_{ij})$ in a category $\mathcal{C}$ is the universal object $X$ (unique up to a unique isomorphism) equipped with arrows $\pi_i: X\to X_i$ with $\pi_j=f_{ij}\circ \pi_i$.  That is, $X$ is such that for any $Y\in \mathrm{Ob}(\mathcal{C})$ and collection of maps $u_i: Y\to X_i$ such that $u_j=f_{ij}\circ u_i$ for all $f_{ij}$, there exists a unique $u: Y\to X$ such that  the diagram 
\begin{equation*}
\xymatrix@=50pt{
& Y \ar@{-->}[d]^{u} \ar[ddl]_{u_i} \ar[ddr]^{u_j} &\\
& X \ar[dl]^{\pi_i} \ar[dr]_{\pi_j} &\\
X_i \ar[rr]^{f_{ij}} && X_j
}
\end{equation*} commutes for all $f_{ij}$.\end{defn}
For an inverse family $(X_i, f_{ij})$ in $\sets$, the inverse limit can be explicitly constructed as
\begin{equation}\label{invlim}\varprojlim X_i =\left\{ (a_i)\in \prod X_i : a_j=f_{ij}(a_i) \text{ for all } f_{ij}\right\},\end{equation}
which may, in some cases, be empty.

\section{Our Categorical Construction}\label{catconst}
We may now give categorical interpretations of both the set free ultrafilters and the set of all ultrafilters over a set $I$.  In particular, consider the function $\Biggg: \mathcal{FP}(I)\to \mathcal{FPS}(I)$ given by $$\Biggg: \Delta \mapsto \{D\in \Delta: D \text{ is infinite}\}.$$
Then we have the following theorem:
\begin{thm} The set of free ultrafilters on $I$ is in canonical bijection with $$\varprojlim_{\Delta\in \mathcal{FP}(I)} \Biggg(\Delta).$$
Furthermore, the set of all ultrafilters on $I$ is in canonical bijection with $$\varprojlim_{\Delta\in \mathcal{FP}(I)} \Delta.$$
\end{thm}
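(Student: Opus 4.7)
The plan is to exhibit explicit maps between ultrafilters on $I$ and coherent tuples in the inverse limit, and then verify that they are mutually inverse. Given an ultrafilter $\mathcal{U}$, Lemma~\ref{lemma:fpuf} provides for each $\Delta\in\mathcal{FP}(I)$ a unique element $D_\Delta\in\Delta\cap\mathcal{U}$, which yields a candidate tuple $(D_\Delta)\in\prod_\Delta \Delta$. To see this tuple satisfies the compatibility condition in \eqref{invlim}, note that if $\Delta'\leq\Delta$ then $D_{\Delta'}\subset\psi_{\Delta',\Delta}(D_{\Delta'})$; since $D_{\Delta'}\in\mathcal{U}$, upward closure forces $\psi_{\Delta',\Delta}(D_{\Delta'})\in\mathcal{U}$, and uniqueness then gives $\psi_{\Delta',\Delta}(D_{\Delta'})=D_\Delta$.

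For the reverse direction, given a coherent tuple $(D_\Delta)$, I would define
\[
\mathcal{U}\;:=\;\bigl\{A\subset I : D_{\{A,I\setminus A\}}=A\bigr\},
\]
where $\{A,I\setminus A\}$ is interpreted after discarding the empty part. Properties (1) and (4) of Definition~\ref{def:uf} are essentially immediate: $D_{\{I\}}=I\neq\emptyset$, and each binary partition has exactly one selected block. The substantive step is verifying (2) and (3). For (3), given $A,B\in\mathcal{U}$, I would pass to the common refinement $\Delta=\{A\cap B,\,A\setminus B,\,B\setminus A,\,I\setminus(A\cup B)\}$; the compatibility maps into $\{A,I\setminus A\}$ and $\{B,I\setminus B\}$ force $D_\Delta\subset A$ and $D_\Delta\subset B$, hence $D_\Delta=A\cap B$, and then pushing forward along $\Delta\leq\{A\cap B,I\setminus(A\cap B)\}$ shows $A\cap B\in\mathcal{U}$. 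Property (2) is analogous using the three-block partition $\{A,\,B\setminus A,\,I\setminus B\}$.

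To verify the two constructions are mutually inverse, the composition $\mathcal{U}\mapsto(D_\Delta)\mapsto\mathcal{U}'$ is transparent: by construction $D_{\{A,I\setminus A\}}$ is the unique block of $\{A,I\setminus A\}$ lying in $\mathcal{U}$, so $A\in\mathcal{U}'\iff A\in\mathcal{U}$. The other composition $(D_\Delta)\mapsto\mathcal{U}\mapsto(D'_\Delta)$ requires showing $D_\Delta\in\mathcal{U}$ for each $\Delta$; this follows by comparing $\Delta$ to the binary partition $\{D_\Delta,I\setminus D_\Delta\}$ (which $\Delta$ refines) and invoking compatibility of $(D_\Delta)$, and then Lemma~\ref{lemma:fpuf} identifies $D'_\Delta=D_\Delta$.

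Finally, the free case is obtained by restriction: an ultrafilter $\mathcal{U}$ is free precisely when no finite set lies in $\mathcal{U}$, which translates to each $D_\Delta$ being infinite, i.e., $D_\Delta\in\mathrm{Big}(\Delta)$. Conversely, any tuple in $\varprojlim\mathrm{Big}(\Delta)$ yields an ultrafilter containing only infinite distinguished blocks, and one checks any finite $A\in\mathcal{U}$ would force $D_{\{A,I\setminus A\}}=A$ to be finite, a contradiction. The main obstacle I anticipate is purely bookkeeping: handling the degenerate binary partitions when $A=\emptyset$ or $A=I$, and ensuring the refinement arguments for Properties (2) and (3) remain valid after empty blocks are discarded.
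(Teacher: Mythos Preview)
Your proposal is correct and follows essentially the same approach as the paper: the same forward map via Lemma~\ref{lemma:fpuf} and upward closure, and the same specific refinement partitions $\{A,\,B\setminus A,\,I\setminus B\}$ and $\{A\cap B,\,A\setminus B,\,B\setminus A,\,I\setminus(A\cup B)\}$ for verifying the ultrafilter axioms. The paper's only cosmetic differences are that it describes the inverse image of a tuple as $\{a_\Delta : \Delta\in\mathcal{FP}(I)\}$ (which coincides with your binary-partition description) and argues injectivity and surjectivity separately rather than via explicit mutual inverses.
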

\begin{proof}
We prove the second claim above; the first follows analogously.  We claim that each ultrafilter induces a unique element of the inverse limit by the mapping 
$$\Phi:  \mathcal{U}\mapsto (\Delta\cap \mathcal{U})_{\Delta\in \mathcal{FP}(I)}\in \varprojlim_{\Delta\in \mathcal{FP}(I)} \Delta,$$
where
$$ \varprojlim_{\Delta\in \mathcal{FP}(I)} \Delta\subseteqq  \prod_{\Delta\in \mathcal{FP}(I)} \Delta.$$  
We first check that any element of the image of the above map is in the inverse limit, as claimed.  First, note that for all $\Delta\in \mathcal{FP}(I)$, $\Delta\cap \mathcal{U}$ is a singleton by Lemma~\ref{lemma:fpuf}, so $\Phi(\mathcal{U})$ is indeed an element of $\prod_{\Delta\in \mathcal{FP}(I)} \Delta$.  To see that $\Phi(\mathcal{U})$ is in $\varprojlim_{\Delta\in \mathcal{FP}(I)} \Delta$, we check that $\Phi(\mathcal{U})$ satisfies the conditions of the construction in Equation \eqref{invlim} of Section~\ref{sec:invlim}.  In particular, we have that for all $\psi_{\Delta', \Delta}$ with $\Delta'\leq \Delta$, $$\mathcal{U}\cap \Delta'\subseteq\psi_{\Delta', \Delta}(\mathcal{U}\cap \Delta'),$$ so $\psi_{\Delta', \Delta}(\mathcal{U}\cap \Delta')\in \mathcal{U}$ by Property 2 of Definition~\ref{def:uf}.  But, by definition, $\psi_{\Delta', \Delta}(\mathcal{U}\cap \Delta')\in \Delta$, so $\mathcal{U}\cap \Delta= \psi_{\Delta', \Delta}(\mathcal{U}\cap \Delta')$, as desired (as each set contains a single element).  So we have that $$\Phi(\mathcal{U})\in \varprojlim_{\Delta\in \mathcal{FP}(I)} \Delta.$$

We claim that $\Phi$ is the desired canonical bijection.  To see that this map is injective, consider ultrafilters $\mathcal{U}, \mathcal{U}'$ with $\Phi(\mathcal{U})=\Phi(\mathcal{U}')$.  Note that for each $A\in \mathcal{U}$, we may take the partition $\Delta_A=\{A, I\setminus A\}$; then, as $\Phi(\mathcal{U})=\Phi(\mathcal{U}')$, we must have $\mathcal{U}'\cap \Delta_A=A$.  Thus, $A\in \mathcal{U}'$, so $\mathcal{U}\subseteq\mathcal{U}'$.  The reverse inclusion follows identically, so $\mathcal{U}=\mathcal{U}'$.

We now show that $\Phi$ is surjective.  Choose a tuple $(a_\Delta)\in \varprojlim \Delta$; we claim that the set $$U= \{a_\Delta: \Delta\in \mathcal{FP}(I)\}$$ is an ultrafilter and that $\Phi(U)=(a_\Delta)$.  To check that $U$ is an ultrafilter, we verify the four definitional properties.  
\begin{enumerate}
\item $\emptyset\not\in{U}$:  The empty set is not an element of any partition $\Delta$.
\item $A\in{U}, A\subset B\implies B\in{U}$:  Consider the partitions $\Delta_1=\{B, I\setminus B\}$ and $\Delta_2=\{A, B\setminus A, I\setminus B\}$.  Noting that $A=a_{\Delta_2}$, we have $\Delta_2\leq \Delta_1$ and thus $a_{\Delta_1}=\psi_{\Delta_2, \Delta_1}(A)=B$.  So $B\in U$, as desired.
\item $A\in{U}, B\in{U}\implies A\cap B\in{U}$:  Consider the partions $\Delta_1=\{A, I\setminus A\}, \Delta_2=\{B, I\setminus B\}, \Delta_3=\{A\cap B, A\setminus B, B\setminus A, I\setminus (A\cup B)\}$.  We have that $\Delta_3\leq \Delta_1, \Delta_2$, so $\psi_{\Delta_3, \Delta_1}(a_{\Delta_3})=A, \psi_{\Delta_3, \Delta_2}(a_{\Delta_3})=B$.  But then  $a_{\Delta_3}=A\cap B$, so $A\cap B\in U$.
\item $A\not\in{U}\implies I\setminus A\in{U}$:  Let $\Delta=\{A, I\setminus A\}$.  Then at least one of $A, I\setminus A$ (namely, $a_\Delta$) is in $U$.  Since it is not $A$ by assumption, it must be $I\setminus A$.
\end{enumerate}
Clearly, $\Phi(U)=(a_\Delta)$, by construction, so $\Phi$ is bijective.

An identical proof gives the first claim, as we never use the cardinality of the sets involved.  That is, the restriction by $\Biggg$ guarantees that all the elements of each $(a_\Delta)$ are infinite; there is always at least one infinite element in any finite partition of an infinite set (on finite sets, the inverse limit will indeed be empty, as there are no free ultrafilters on finite sets), by the pigeonhole principle.
\end{proof}

\section{A Concrete Example}
As an application of our results on ultrafilters, we note an interesting corollary:
\begin{thm}\label{thm:mt} Consider a function $f: I\to X$, where $I$ is an infinite indexing set.  For $\Delta\in \mathcal{FP}(X)$, let $\Delta(f)$ denote the set $$\Delta(f):=\{D\in \Delta: f^{-1}(D) \text{ is infinite}\}.$$  Then
$$\varprojlim_{\Delta\in \mathcal{FP}} \Delta(f)\not=\emptyset.$$
\end{thm}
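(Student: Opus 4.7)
The plan is to exhibit an explicit element of the inverse limit by pushing forward a free ultrafilter on $I$ along $f$. Since $I$ is infinite, Theorem~\ref{thm:fut} supplies a free ultrafilter $\mathcal{U}$ on $I$. I would then define the pushforward
\[
f_{*}\mathcal{U}:=\{A\subseteq X:f^{-1}(A)\in \mathcal{U}\},
\]
and observe, using the elementary facts that $f^{-1}$ commutes with complements and intersections and preserves inclusions, that $f_{*}\mathcal{U}$ satisfies the four axioms of Definition~\ref{def:uf} and is therefore an ultrafilter on $X$.

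Given this, the main theorem of Section~\ref{catconst} associates to $f_{*}\mathcal{U}$ an element $(a_\Delta)_{\Delta\in\mathcal{FP}(X)}\in\varprojlim_\Delta \Delta$, where $a_\Delta$ is the unique block of $\Delta$ lying in $f_{*}\mathcal{U}$ (Lemma~\ref{lemma:fpuf}). To finish, I would verify that in fact $a_\Delta\in\Delta(f)$ for every $\Delta$: by the definition of the pushforward, $a_\Delta\in f_{*}\mathcal{U}$ means $f^{-1}(a_\Delta)\in\mathcal{U}$, and since $\mathcal{U}$ is free it contains no finite set, whence $f^{-1}(a_\Delta)$ is infinite. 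The compatibility $a_\Delta=\psi_{\Delta',\Delta}(a_{\Delta'})$ needed for membership in $\varprojlim\Delta(f)$ is inherited directly from the compatibility already established in $\varprojlim\Delta$, since the connecting maps $\psi_{\Delta',\Delta}$ send blocks with infinite preimage to blocks with infinite preimage (a superset of an infinite set is infinite), so they restrict to maps $\Delta'(f)\to\Delta(f)$.

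The only substantive ingredient is the Free Ultrafilter Theorem; once a free ultrafilter on $I$ is in hand, the rest is a direct unwinding of the bijection of Section~\ref{catconst}, and I do not anticipate any real obstacle. One could equivalently bypass the pushforward and define $a_\Delta$ directly as the unique $D\in\Delta$ with $f^{-1}(D)\in\mathcal{U}$, applying Lemma~\ref{lemma:fpuf} to the finite partition $\{f^{-1}(D):D\in\Delta\}$ of $I$; this phrasing makes the use of freeness (no $f^{-1}(D)$ can be finite) entirely transparent.
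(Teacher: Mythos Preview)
Your proposal is correct and matches the paper's approach: the paper likewise fixes a free ultrafilter $\mathcal{U}$ on $I$ and exhibits the tuple $(D\in\Delta:f^{-1}(D)\in\mathcal{U})_{\Delta\in\mathcal{FP}(X)}$, appealing to the Section~\ref{catconst} argument for compatibility. Your packaging via the pushforward $f_{*}\mathcal{U}$ is a mild presentational variant, and the alternative phrasing you give at the end is literally the paper's construction; your write-up is in fact more explicit than the paper's about why each $a_\Delta$ lands in $\Delta(f)$.
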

\begin{proof}
For $\Delta\in \mathcal{FP}(X)$, let $$f^{-1}(\Delta)=\{f^{-1}(D): D\in \Delta\}.$$  Note that every partition in $\mathcal{FP}(I)$ admits a representation in this fashion.  Then any free ultrafilter $\mathcal{U}$ on $I$ gives an element of the inverse limit above, e.g. $$(D\in \Delta: f^{-1}(D)\in \mathcal{U})_{\Delta\in \mathcal{FP}(X)},$$ which is an element of the inverse limit precisely by the argument in Section~\ref{catconst}, above.
\end{proof}
\begin{cor}\label{thecor}
Let $X$ be a set and $T:X\to X$ be a function.  For $\Delta\in \mathcal{FP}(X)$, let $$\Delta(x):=\{D\in \Delta: \{n\in \mathbb{N}: T^n(x)\in D\} \text{ is infinite}\}.$$
Then for each $x\in X$, we have $$\varprojlim_{\Delta\in \mathcal{FP}(X)} \Delta(x)\not=\emptyset.$$
\end{cor}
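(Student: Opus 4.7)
The plan is to derive this corollary as an immediate specialization of Theorem~\ref{thm:mt}. The key observation is that the sequence of iterates $T^n(x)$ naturally packages the dynamical data $(X,T,x)$ into a single function from an infinite indexing set into $X$, to which the preceding theorem applies verbatim.

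Specifically, I would take $I = \mathbb{N}$ (an infinite indexing set) and define $f : \mathbb{N} \to X$ by $f(n) := T^n(x)$. Then for any $D \in \Delta \in \mathcal{FP}(X)$, the preimage $f^{-1}(D) = \{n \in \mathbb{N} : T^n(x) \in D\}$ is exactly the set appearing in the definition of $\Delta(x)$. Consequently, the condition ``$f^{-1}(D)$ is infinite'' used to define $\Delta(f)$ in Theorem~\ref{thm:mt} coincides with the condition ``$\{n \in \mathbb{N} : T^n(x) \in D\}$ is infinite'' used to define $\Delta(x)$, so $\Delta(f) = \Delta(x)$ as subsets of $\Delta$ for every finite partition $\Delta$ of $X$.

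Since these identifications are natural in $\Delta$ (they commute with the restriction maps $\psi_{\Delta', \Delta}$), the two inverse systems agree, and hence
\[
\varprojlim_{\Delta \in \mathcal{FP}(X)} \Delta(x) \;=\; \varprojlim_{\Delta \in \mathcal{FP}(X)} \Delta(f).
\]
Applying Theorem~\ref{thm:mt} to the function $f$ now yields nonemptiness of the right-hand side, which completes the argument.

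There is no substantive obstacle here: the work has already been done in Theorem~\ref{thm:mt} (and, upstream, in the existence of free ultrafilters on the infinite set $\mathbb{N}$). The only thing to verify carefully is that $f$ need not be injective for Theorem~\ref{thm:mt} to apply—only that its domain $\mathbb{N}$ is infinite—so eventually periodic or constant orbits cause no trouble.
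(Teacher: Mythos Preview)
Your proposal is correct and follows exactly the paper's approach: fix $x$, set $I=\mathbb{N}$ and $f(n)=T^n(x)$, and invoke Theorem~\ref{thm:mt}. The paper's proof is a one-line version of what you wrote; your additional remarks about naturality and non-injectivity of $f$ are accurate but not strictly needed.
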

\begin{proof}
Fixing $x$, we may take $I=\mathbb{N}$ and $f: n\mapsto T^n(x)$ in Theorem~\ref{thm:mt}.   The result follows immediately.
\end{proof}

Corollary~\ref{thecor} negatively answers the conjecture Rosinger posed in \cite{rosinger}.

\section*{Acknowledgements}
The authors are extremely grateful to Professor Elem\'er Elad Rosinger for bringing their attention to the problem and for his helpful comments and suggestions on the work.  They would also like to acknowledge Brett Harrison for his excellent suggestions on earlier drafts of this paper.

\end{document}